 \newtheorem{thm}{Theorem}%[section]
 \newtheorem{lm}[thm]{Lemma}
 \newtheorem{res}[thm]{Result}
 \newtheorem{crl}[thm]{Corollary}
 \newtheorem{prop}[thm]{Proposition}
 \theoremstyle{definition}
 \newtheorem{df}[thm]{Definition}
 \newcommand{\FF}{\mathbb F}
 \renewcommand{\leq}{\leqslant}
 \renewcommand{\geq}{\geqslant}
 \newcommand{\vspan}[1]{\left \langle #1 \right \rangle}
 \newcommand{\sett}[2]{ \left\{ #1 \, \, || \, \, #2 \right \} }
 \newcommand{\floor}[1]{\left \lfloor #1 \right \rfloor}
\DeclareMathOperator{\supp}{supp}
 \DeclareMathOperator{\PG}{PG}
 \newcommand{\pg}{\PG}
 \DeclareMathOperator{\PGL}{PGL}
 \DeclareMathOperator{\GL}{GL}
\let\@fnsymbol\@arabic  % Redefine \@fnsymbol to use numbers instead of symbols
\title{A note on short minimal codes from subgeometries}
\author{
 Sam Adriaensen
  \thanks{Department of Mathematics and Data Science, Vrije Universiteit Brussel, Pleinlaan 2, 1050 Elsene, Belgium. \href{mailto:sam.adriaensen@vub.be}{sam.adriaensen@vub.be}} 
 \and Peter Sziklai
  \thanks{Eotvos L. University, Budapest, Department of Computer Science,
  \href{mailto:peter.sziklai@ttk.elte.hu}{peter.sziklai@ttk.elte.hu}
  }
  \thanks{Department of Mathematics, J. Selye University, 945 01 Komárno, Slovakia}
%  \footnotemark[5]
 \and Zsuzsa Weiner
\thanks{Eotvos L. University, Budapest, Department of Computer Science,
    %NKFIH 153080 research grant
  % \thanks{HUN-REN-ELTE Geometric and Algebraic Combinatorics Research Group, P\'azm\'any P\'eter s\'et\'any 1/C, H-1117 Budapest, Hungary.
   \href{mailto:zsuzsa.weiner@gmail.com}{zsuzsa.weiner@gmail.com}} 
}
\date{}
\begin{document}

\maketitle

\begin{abstract}
 In a 2022, Bartoli, Cossidente, Marino, and Pavese proved that in the projective space ${\rm PG}(3,q^3)$, one can find three $\mathbb F_q$-subgeometries such that the union of their point sets is a strong blocking set.
 This proves the existence of linear minimal codes with parameters $[3(q^2+1)(q+1),4]_{q^3}$ for every prime power $q$.
 We give a short proof of this result for odd values of $q > 9$, using the theory of small blocking sets in projective planes.
\end{abstract}

\paragraph{Keywords.} Minimal codes; Finite projective geometry; Blocking sets.

\paragraph{MSC.} 51E21, % Blocking sets, ovals, k-arcs
94B27 % Geometric methods applied to coding theory

\section{Introduction}

Throughout the article, $p$ denotes a prime number, $q = p^h$ denotes a prime power, and the finite field containing $q$ elements will be denoted as $\FF_q$.
The set of non-zero elements of $\FF_q$ is denoted as $\FF_q^*$.

A $k$-dimensional subspace $C$ of $\FF_q^n$ is called a \emph{linear code} with parameters  $[n,k]_q$.
Linear codes over finite fields play a central role in the theory of error-correction, and have  applications in other branches of the mathematics of communication.
Given a codeword $c \in C$, we define its \emph{support} as the set $\supp(c)$ of coordinate positions where $c$ has a non-zero entry.
The size of $\supp(c)$ is called the \emph{(Hamming) weight} of $c$.
A codeword $c \in C$ is called \emph{minimal} if the only codewords $w \in C$ with $\supp(w) \subseteq \supp(c)$ are the scalar multiples of $c$.
The concept of minimal codewords gained popularity after Massey \cite{Massey} introduced a secret sharing scheme, constructed from a linear code $C$, where the minimal access structures correspond to the minimal codewords of $C$.
It is in general a difficult problem to determine which codewords of a linear code are minimal.
This motivated the study of linear codes in which all codewords are minimal, called \emph{minimal codes}.
Arguably, the central problem in the study of minimal codes is to study the shortest possible length $n$ for which a minimal $[n,k]_q$ code exists, in function of $k$ and $q$.
Let us denote this function as $m(k,q)$.

A major breakthrough in the study of $m(k,q)$ occurred quite recently by studying equivalent geometrical structures.
We refer the reader to the recent survey by Scotti \cite{Scotti} for a more detailed account.
Let $\pg(k-1,q)$ denote the projective geometry consisting of the subspaces of the vector space $\FF_q^k$.
The projective dimension of a subspace of $\pg(k-1,q)$ is one less than its vector space dimension.
In particular, the points, lines, and planes of $\pg(k-1,q)$ are the subspaces of $\FF_q^k$ of respective dimensions $1$, $2$, and $3$.
The subspaces of $\pg(k-1,q)$ of (projective) dimension $k-2$ are called \emph{hyperplanes}.

\begin{df}
 A set $B$ of points in $\pg(k-1,q)$ is called a \emph{blocking set} if each hyperplane of $\pg(k-1,q)$ contains a point of $B$.
 If in addition each hyperplane is spanned by its intersection with $B$, we call $B$ a \emph{strong blocking set}.
\end{df}

We refer the reader to \cite{Blokhuis:Sziklai:Szonyi} for a survey on blocking sets.
We mention that the smallest blocking sets of $\pg(k-1,q)$ have size $q+1$ and consist of the points on a line, see e.g.\ \cite[Theorem 2.1]{Blokhuis:Sziklai:Szonyi}.

Strong blocking sets were first defined by Davydov, Giulieti, Marcugini, and Pambianco \cite{Davydov:Etal} in the study of so-called covering codes.
They were introduced into the study of minimal codes by Bonini and Borello \cite{Bonini:Borello} (actually, they were reinvented in \cite{Bonini:Borello} under the name cutting blocking sets).
Alfarano, Borello and Neri \cite{Alfarano:Borello:Neri}, and independently Tang, Qiu, Liao, and Zhou \cite{Tang} proved an equivalence between strong blocking sets and minimal codes.
This equivalence works as follows.
A \emph{generator matrix} of a linear $[n,k]_q$ code $C$ is a matrix $G \in \FF_q^{k \times n}$ whose row space equals $C$.
We will assume that $G$ has no zero columns, such that each column of $G$ corresponds to a point of $\pg(k-1,q)$.
This yields a (multi)set of $n$ points in $\pg(k-1,q)$, which we call a \emph{projective system} associated to $C$.
Note that if $G$ is one generator matrix of $C$, then its other generator matrices are $M G$ with $M \in \GL(k,q)$.
Therefore, the projective systems associated to $C$ form an orbit under the action of $\PGL(k,q)$.

\begin{res}[{\cite{Alfarano:Borello:Neri,Tang}}]
 A linear $[n,k]_q$ code is minimal if and only if its associated projective systems are strong blocking sets.
\end{res}

Therefore, $m(k,q)$ can equivalently be defined as the size of the smallest strong blocking set in $\pg(k-1,q)$.
This interpretation allowed Alfarano, Borello, Neri, and Ravagnani \cite[Theorem 2.14]{Alfarano:Borello:Neri:Ravagnani} to prove that $m(k,q) \geq (k-1)(q+1)$.
On the other hand, H\'eger and Nagy \cite[Theorem 4.1]{Heger:Nagy} proved that $m(k,q) \leq (2-o_q(1))(k-1)(q+1)$ using a probabilistic construction.
Therefore, we are interested in constructing families of strong blocking sets in $\pg(k-1,q)$ whose size is along the lines of $c (k-1) q$ for a reasonably small constant $c$.

Several of the geometric constructions of strong blocking sets consist of taking the union of several small blocking sets.
Recall that the lines constitute the smallest blocking sets.
Sets of lines whose union yields a strong blocking set are said to be \emph{in higgledy-piggledy arrangement}, and their study was initiated by Fancsali and the second author \cite{Fancsali:Sziklai}, actually predating the link with minimal codes.
Subsequently, several papers have investigated small sets of lines in $\pg(k-1,q)$ whose union yields a strong blocking set, see e.g.\ \cite{Alon:Etal,Bartoli:Cossidente:Marino:Pavese,Denaux} and the aforementioned result by H\'eger and Nagy \cite{Heger:Nagy}.
If $q \geq \floor{\frac32 (k-1)}$, then a set of lines in higgledy-piggledy arrangement in $\pg(k-1,q)$ contains at least $\floor{\frac32 (k-1)}$ lines \cite{Fancsali:Sziklai}.
The corresponding strong blocking set therefore has size approximately at least $\frac 32 (k-1)q$.
It is hence natural to wonder whether we can lower the constant $\frac 32$ by taking unions of other small blocking sets than lines.

\begin{df}
 Consider in $\pg(k-1,q^m)$ the subspaces of $\FF_{q^m}^k$ having a basis of vectors of $\FF_q^k$.
 These subspaces form a projective geometry isomorphic to $\pg(k-1,q)$.
 This set of subspaces and any image of this set under the action of $\PGL(k,q^m)$ is called a \emph{subgeometry} of $\pg(k-1,q^m)$ (or an $\FF_q$-subgeometry in case we want to emphasize which subfield of $\FF_{q^m}$ is used to construct the subgeometry).
\end{df}

The points of an $\FF_q$-subgeometry of $\pg(k-1,q^{k-1})$ are known to be a blocking set of size $\frac{q^k-1}{q-1}$, see e.g.\ \cite[Proposition 2.10]{Blokhuis:Sziklai:Szonyi}.
The size is close to $q^{k-1}+1$, the number of points on a line.
Moreover, the points of $\pg(k-1,q^{k-1})$ can be partitioned into disjoint $\FF_q$-subgeometries, see e.g.\ \cite[\S 4.3]{Hirschfeld:98}.
In this note, we will explore how to choose a small number of subgeometries, such that the union of their point sets constitutes a strong blocking set.
In $\pg(k-1,q^{k-1})$, one would need at least $k-1$ subgeometries.
In case these $k-1$ subgeometries can be found, this yields a strong blocking set of size $(k-1) \frac{q^k-1}{q-1}$, whose size is quite close to lower bound $(k-1)(q^{k-1}+1)$.

Let us first examine the case $k=3$.
A strong blocking set in $\pg(2,q)$ is just a set of points intersecting every line in at least 2 points.
Therefore, the union of the points of two disjoint $\FF_q$-subgeometries of $\pg(2,q^2)$ forms a strong blocking set.
It follows that $m(3,q^2) \leq 2 (q^2 + q + 1)$.
This bound was proven to be tight if $q \geq 5$ by Ball and Blokhuis \cite{Ball:Blokhuis}.

The case $k = 4$ is more complicated.
The union of the points of 3 disjoint $\FF_q$-subgeometries of $\pg(3,q^3)$ intersects every plane in at least 3 points, but does not necessarily form a strong blocking set.
Indeed, to ensure that these subgeometries constitute a strong blocking set, they need to be chosen in a more careful manner.
However, Bartoli, Cossidente, Marino and Pavese \cite{Bartoli:Cossidente:Marino:Pavese} proved that you can always find 3 suitable subgeometries.

\begin{res}[{\cite[Theorem 2.26]{Bartoli:Cossidente:Marino:Pavese}}]
 \label{Res:BCMP}
 In $\pg(3,q^3)$, there exist 3 $\FF_q$-subgeometries, such that the union of their point sets is a strong blocking set of size $3(q^2+1)(q+1)$.
\end{res}

The proof of this beautiful result as presented in \cite{Bartoli:Cossidente:Marino:Pavese} is quite long.
The goal of this note is to use the literature on blocking sets in $\pg(2,q)$ to give a considerably shorter proof, at the expense that we need the extra condition that $q = 7$ or $q > 9$ is odd.
We hope that our results can be extended to prove that there exist $k-1$ subgeometries in $\pg(k-1,q^{k-1})$ such that the union of their point sets forms a strong blocking set for higher values of $k$.
To this end, we describe the setup of the problem for general values of $k$ in \Cref{Sec:General k}.
We focus on the case $k=4$ and give our alternative proof for \Cref{Res:BCMP} (assuming that $q \geq 7$) in \Cref{Sec:k=4}.

\section{Strong blocking sets from disjoint subgeometries}
 \label{Sec:General k}

Let us review how the points of $\pg(k-1,q^{k-1})$ can be partitioned into the points of $\FF_q$-subgeometries, we refer the reader to \cite[\S 4.3]{Hirschfeld:98} for more details.
To construct $\pg(k-1,q^{k-1})$, we need a $k$-dimensional vector space over $\FF_{q^{k-1}}$.
We can choose the field $\FF_{q^{k(k-1)}}$ to be this vector space.
Define
\[
 R = \FF_{q^{k-1}} \cdot \FF_{q^k} \coloneqq \sett{x y}{x \in \FF_{q^{k-1}}, \, y \in \FF_{q^{k}}},
\]
and let $R^*$ be $R \setminus \{0\}$.
Then $R^*$ is a multiplicative subgroup of $\FF_{q^{k(k-1)}}^*$.
Consider a coset $\alpha R^*$ of $R^*$.
Consider the set of $1$-dimensional $\FF_{q^{k-1}}$-subspaces, each of them being spanned by an element of $\alpha R^*$.
This can be seen as a set of points in $\pg(k-1,q^{k-1})$, and as such constitutes the set of points of an $\FF_q$-subgeometry.
Since the cosets of $R^*$ partition $\FF_{q^{k(k-1)}}^*$, this partitions the points of $\pg(k-1,q^{k-1})$ into point sets of subgeometries.

\begin{df}
 Call a set of elements $\alpha_1, \dots, \alpha_m$ in $\FF_{q^{k(k-1)}}$ \emph{$R$-independent} if the only solution to $\rho_1 \alpha_1 + \ldots + \rho_m \alpha_m = 0$ with all $\rho_i \in R$ is $\rho_1 = \ldots = \rho_m = 0$.
\end{df}

\begin{lm}
 \label{Lm:Independent}
Let $\alpha_1, \dots, \alpha_{k-1}$ be elements of $\FF_{q^{k(k-1)}}$.
Consider the set $S$ of points in $\pg(k-1,q^{k-1})$ having a coordinate vector in $\bigcup_{i=1}^{k-1} \alpha_i R^*$.
If $\alpha_1, \dots, \alpha_{k-1}$ are $R$-independent, then $S$ is a strong blocking set.
Furthermore, if $q \geq k$, then the converse holds.
\end{lm}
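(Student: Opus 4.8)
The plan is to work with the identification of $\pg(k-1,q^{k-1})$ as the $1$-dimensional $\FF_{q^{k-1}}$-subspaces of $\FF_{q^{k(k-1)}}$, as set up above, and to translate "strong blocking set" into a statement about $R$-independence. Recall that a hyperplane of $\pg(k-1,q^{k-1})$ is spanned by $k-1$ of its points, so $S$ is a strong blocking set if and only if every hyperplane $H$ is spanned by points of $S$ lying in $H$; equivalently, every hyperplane contains $k-1$ points of $S$ in general position (spanning $H$). I would phrase hyperplanes as kernels of nonzero $\FF_{q^{k-1}}$-linear functionals on $\FF_{q^{k(k-1)}}$ (viewed as a $k$-dimensional $\FF_{q^{k-1}}$-space), and then note that a point spanned by $x \alpha_i$ (with $x \in R^*$) lies on such a hyperplane exactly when the functional kills $x\alpha_i$. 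The key bookkeeping fact is that the subgeometry attached to $\alpha_i$ meets a hyperplane $H$ in a subspace of the subgeometry, and the dimension of $H \cap (\text{subgeometry } i)$ over $\FF_q$ is controlled by how $\alpha_i$ interacts with $H$; summing these intersections over $i$ and comparing with $k-1$ is what must be forced.

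For the forward direction, I would argue the contrapositive: suppose $\alpha_1,\dots,\alpha_{k-1}$ are $R$-dependent, say $\sum \rho_i \alpha_i = 0$ with not all $\rho_i$ zero, and reindex so that $\rho_1,\dots,\rho_t$ are the nonzero coefficients. The elements $\rho_i \alpha_i$ (for $i \le t$) then span an $\FF_{q^{k-1}}$-subspace $V$ of dimension at most $t-1$, hence at most $k-2$, so $V$ is contained in some hyperplane $H$. I would then show that for each $i \le t$, the point $\langle \alpha_i \rangle_{\FF_{q^{k-1}}} = \langle \rho_i\alpha_i\rangle_{\FF_{q^{k-1}}}$ actually lies in $H$, and moreover all $q+1$ points of the $i$-th subgeometry on the line through the origin in direction $\alpha_i$... — more precisely, I would show the entire intersection of $S$ with $H$ is forced to lie in a subspace of $H$ of projective dimension $< k-2$, so $H$ is not spanned by $S \cap H$. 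The subtlety here is that a single relation only directly places the points $\langle\alpha_i\rangle$ in $H$, not the rest of the subgeometries; one must use that $H \cap (\text{subgeometry } i)$ is a subgeometry of smaller dimension, and a dimension count over $\FF_q$ shows the total can't reach $k-1$ independent points when a nontrivial $R$-relation exists. This is where $q \ge k$ is used: it guarantees that the subgeometry intersection pattern with a hyperplane is "rigid" enough — essentially, a subgeometry $\pg(k-1,q)$ inside a hyperplane-section cannot span more than its own dimension allows, and the $q \ge k$ bound rules out the degenerate small-$q$ coincidences where unrelated subgeometries could conspire to span $H$ anyway.

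For the converse implication that was already granted (i.e.\ $R$-independence $\Rightarrow$ strong blocking set, with no condition on $q$), I would fix a hyperplane $H = \ker f$ and consider, for each $i$, the $\FF_q$-subspace $W_i = \{\rho \in R : f(\rho \alpha_i) = 0\}$ worth of directions lying in $H$; the $R$-independence of the $\alpha_i$ should force $\dim_{\FF_q}$ of an appropriate combined object to be at least $k-1$, giving $k-1$ points of $S \cap H$ in general position, hence $H = \langle S \cap H\rangle$. The cleanest route is: pick one point of $S$ in $H$ from subgeometry $1$; quotient by it (projecting $H$ and the remaining subgeometries) and induct, using that $R$-independence is inherited by images under the relevant projection. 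The main obstacle in the whole argument is the forward (converse) direction and specifically pinning down exactly how the bound $q \ge k$ enters: one needs a clean statement that an $\FF_q$-subgeometry meeting a hyperplane spans at most the expected dimension, together with an additivity/pigeonhole argument over the $k-1$ subgeometries showing that failure of $R$-independence strictly drops the total span below $k-1$; getting the off-by-one dimension counts exactly right, and isolating the precise point where $q < k$ would break rigidity, is the delicate part. Everything else — translating between subgeometries and cosets of $R^*$, and between hyperplanes and linear functionals — is routine.
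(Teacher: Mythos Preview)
Your labels for ``forward'' and ``converse'' are swapped, but set that aside.

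For the implication $R$-independent $\Rightarrow$ strong blocking set, your inductive/quotient plan is workable but unnecessarily heavy. The paper's argument is a two-line observation you are missing: each $\FF_q$-subgeometry of $\pg(k-1,q^{k-1})$ is already a blocking set, so every hyperplane $\Pi$ contains at least one point $P_i=\langle\rho_i\alpha_i\rangle$ from the $i$-th subgeometry. Any $\FF_{q^{k-1}}$-relation $\sum_i\sigma_i\rho_i\alpha_i=0$ has coefficients $\sigma_i\rho_i\in R$, so $R$-independence forces all $\sigma_i=0$; hence $P_1,\dots,P_{k-1}$ span $\Pi$. No induction, no quotient, no analysis of $W_i$.

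For the converse (under $q\geq k$) there is a genuine gap. You propose to take a nontrivial relation $\sum\rho_i\alpha_i=0$, put the points $\langle\rho_i\alpha_i\rangle$ into some hyperplane $H$, and then argue that $S\cap H$ lies in a proper subspace of $H$. This last step is false for a generic such $H$: a hyperplane containing one point of subgeometry $i$ typically meets that subgeometry in a full $\pg(k-2,q)$ slice, and those slices can very well span $H$. The correct mechanism is not to pick $H$ directly but to pick a $(k-3)$-space $\pi$ meeting every subgeometry (the dependence gives you $t$ points spanning dimension $\leq t-1$; throw in one point from each of the remaining $k-1-t$ subgeometries to reach projective dimension $\leq k-3$). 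Then \emph{count}: for subgeometry $i$ with $\dim(\pi\cap\text{subgeom}_i)=m_i-1$, any hyperplane through $\pi$ meeting subgeometry $i$ outside $\pi$ is determined by an $m_i$-space of the subgeometry through that intersection, giving at most $\frac{q^{k-m_i}-1}{q-1}\leq\frac{q^{k-1}-1}{q-1}$ such hyperplanes. Summing over $i$, at most $(k-1)\frac{q^{k-1}-1}{q-1}$ hyperplanes through $\pi$ meet $S$ outside $\pi$; this is $\leq q^{k-1}-1<q^{k-1}+1$ precisely when $q\geq k$, so some hyperplane $\Pi\supset\pi$ has $S\cap\Pi\subseteq\pi$. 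That pigeonhole inequality is exactly where $q\geq k$ enters --- it is not a ``rigidity'' phenomenon about subgeometry intersections, but a straightforward comparison of $k-1$ against $q-1$ in a count.
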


\begin{proof}
Take a hyperplane $\Pi$ in $\pg(k-1,q^{k-1})$.
For every $i$, the points of the subgeometry corresponding to $\alpha_i R^*$ form a blocking set, hence contain at least one point $P_i$ of $\Pi$.
Then $P_i$ has a coordinate vector $\rho_i \alpha_i$ for some $\rho_i \in R^*$.
We claim that $\rho_1 \alpha_1, \dots, \rho_{k-1} \alpha_{k-1}$ are $\FF_{q^{k-1}}$-linearly independent.
Indeed, suppose that $\sigma_1, \dots, \sigma_{k-1}$ 
are scalars in $\FF_{q^{k-1}}$ such that $\sum_{i=1}^{k-1} \sigma_i \rho_i \alpha_i = 0$.
Then each $\sigma_i \rho_i$ is an element of $R$, and since the $\alpha_i$ are $R$-independent, all $\sigma_i \rho_i$ are zero.
Since none of the $\rho_i$ are zero, this means that all the $\sigma_i$ are zero.
Therefore, the points $P_i$ are independent and must span $\Pi$.

Now suppose that $q \geq k$, and that $\alpha_1, \dots, \alpha_{k-1}$ are not $R$-independent.
Then there exists a $(k-3)$-space $\pi$ of $\pg(k-1,q^{k-1})$ containing a point of each subgeometry.
Suppose that the subgeometry corresponding to $\alpha_i R^*$ intersects $\pi$ in a subspace of dimension $m_i-1$.
Then $m_i \geq 1$, and there are 
\[
 \frac{q^{k-m_i}-1}{q-1} \leq \frac{q^{k-1}-1}{q-1}
\]
$m_i$-spaces of the subgeometry through its intersection with $\pi$.
Therefore, there are at most
\[
 (k-1) \frac{q^{k-1}-1}{q-1}
\]
hyperplanes through $\pi$ that contain another point of $S$.
If $q \geq k$, this number at most $q^{k-1}-1$, which is smaller than $q^{k-1}+1$, the number of hyperplanes through $\pi$.
Hence, there exists a hyperplane $\Pi$ through $\pi$ such that $\Pi \cap S$ is contained in $\pi$.
Thus, $S$ is not a strong blocking set.
\end{proof}

\begin{df}
Define $B(k,q)$ to be the set of points in $\pg(k-2,q^{k(k-1)})$ having a coordinate vector in $R^{k-1} \subset \FF_{q^{k(k-1)}}^{k-1}$.
\end{df}

\begin{prop}
 \label{Prop:Not blocking}
 If $B(k,q)$ is {\bf not} a blocking set of $\pg(k-2,q^{k(k-1)})$, then $\pg(k-1,q^{k-1})$ contains $k-1$ subgeometries such that the union of their point sets is a strong blocking set.
\end{prop}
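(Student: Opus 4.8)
The plan is to deduce, from the failure of $B(k,q)$ to be a blocking set, the existence of $k-1$ elements $\alpha_1,\dots,\alpha_{k-1} \in \FF_{q^{k(k-1)}}$ that are $R$-independent. Once such a tuple is in hand, \Cref{Lm:Independent} immediately gives that the union of the point sets of the subgeometries attached to the cosets $\alpha_1 R^*,\dots,\alpha_{k-1}R^*$ is a strong blocking set of $\pg(k-1,q^{k-1})$, so the whole statement reduces to producing an $R$-independent $(k-1)$-tuple.

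The bridge is a dual reading of the incidence between $B(k,q)$ and the hyperplanes of $\pg(k-2,q^{k(k-1)})$. Every hyperplane of $\pg(k-2,q^{k(k-1)})$ is the zero locus of a nonzero linear form $(x_1,\dots,x_{k-1}) \mapsto \sum_{i=1}^{k-1} a_i x_i$ with $(a_1,\dots,a_{k-1}) \in \FF_{q^{k(k-1)}}^{k-1}$, and it is determined by this coefficient vector up to a scalar. I would then observe that such a hyperplane $H$ is disjoint from $B(k,q)$ if and only if there is no nonzero vector $(\rho_1,\dots,\rho_{k-1}) \in R^{k-1}$ with $\sum_{i=1}^{k-1} a_i\rho_i = 0$: one direction holds because every point of $B(k,q)$ has, by definition, a coordinate vector in $R^{k-1}$, and the reverse because such a nonzero vector spans a point that lies both in $B(k,q)$ and on $H$. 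But the displayed condition is, word for word, the statement that $a_1,\dots,a_{k-1}$ are $R$-independent.

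Hence, if $B(k,q)$ is not a blocking set, I would choose a hyperplane $H$ of $\pg(k-2,q^{k(k-1)})$ missed by $B(k,q)$, read off the coefficients $a_1,\dots,a_{k-1}$ of a linear form vanishing on $H$, and set $\alpha_i = a_i$. By the equivalence above these are $R$-independent; in particular each $\alpha_i$ is nonzero (an $\alpha_i = 0$ would make the $i$-th standard basis vector, which lies in $R^{k-1}$, a nontrivial relation), and the cosets $\alpha_i R^*$ are pairwise distinct (since $-1 \in R$), so we genuinely obtain $k-1$ subgeometries to which \Cref{Lm:Independent} applies.

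I do not anticipate a real obstacle in this proposition: its content is merely the translation between ``$B(k,q)$ omits a hyperplane'' and ``an $R$-independent $(k-1)$-tuple exists'', the substantive geometry having already been done in \Cref{Lm:Independent}. The genuine difficulty is deferred to the next section, where one must \emph{prove} that $B(k,q)$ is not a blocking set for the parameters of interest; for $k=4$ this amounts to showing that $B(4,q)$ fails to block $\pg(2,q^{12})$, which is where the structure theory of small blocking sets of $\pg(2,q)$ enters.
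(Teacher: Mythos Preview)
Your proposal is correct and follows essentially the same approach as the paper: pick a hyperplane missed by $B(k,q)$, observe that the coefficients of its defining linear form are $R$-independent, and invoke \Cref{Lm:Independent}. You add a little extra care (non-vanishing of the $\alpha_i$ and pairwise distinctness of the cosets $\alpha_i R^*$) that the paper leaves implicit, but the argument is the same.
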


\begin{proof}
 Suppose that $B(k,q)$ is not a blocking set.
 Then there exists a hyperplane, defined by some linear equation $\alpha_1 X_1 + \ldots + \alpha_{k-1} X_{k-1} = 0$, that does not intersect $B(k,q)$.
 This is equivalent to $\alpha_1, \dots, \alpha_{k-1}$ being $R$-independent.
 The proposition then follows from \Cref{Lm:Independent}.
\end{proof}

Thus, our goal is to prove that $B(k,q)$ is not a blocking set.
We note that
\begin{align*}
 r \coloneqq |R^*| = \frac{(q^{k-1}-1)(q^k-1)}{q-1},
 && \text{and} &&
 |B(k,q)| = \frac{(r+1)^{k-1} - 1}r.
\end{align*}

Next, we take a look at the symmetries of $B(k,q)$.
Consider a projective space $\pg(k-1,q)$, and suppose that its underlying vector space is the standard vector space $\FF_q^k$.
Given a point $P$ of $\pg(k-1,q)$, we define the \emph{weight} of $P$ as the weight of any of its coordinate vectors, i.e.\ the number of coordinate positions with non-zero entry.

\begin{lm}
 \label{Lm:Stabiliser}
 The setwise stabiliser of $B(k,q)$ in $\PGL(k-1,q^{k(k-1)})$ acts transitively on the points of $B(k,q)$ of fixed weight.
\end{lm}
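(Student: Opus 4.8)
The plan is to write down explicitly a subgroup of the stabiliser of $B(k,q)$ that is already transitive on the weight-$w$ points, and then to prove transitivity by reducing every such point to a single canonical representative. Two families of collineations will do the job. First, for each permutation $\sigma$ of $\{1,\dots,k-1\}$ the associated coordinate permutation of $\pg(k-2,q^{k(k-1)})$ clearly fixes the set $R^{k-1}$ setwise, hence stabilises $B(k,q)$, and it preserves the weight of every point. Second, for each tuple $(r_1,\dots,r_{k-1})\in(R^*)^{k-1}$ the diagonal collineation $\mathrm{diag}(r_1,\dots,r_{k-1})$ stabilises $B(k,q)$: since $R^*=R\setminus\{0\}$ is a multiplicative group -- this is recorded just before \Cref{Lm:Independent} -- each $r_i$ satisfies $r_iR=R$, so this map sends $R^{k-1}$ bijectively onto itself, and it likewise preserves weights.

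Next I would verify transitivity. Given $P\in B(k,q)$ of weight $w$, fix a coordinate vector $v=(\rho_1,\dots,\rho_{k-1})\in R^{k-1}$ of $P$ and put $I=\{i:\rho_i\neq 0\}$, a set of size $w$. Applying a coordinate permutation that takes $I$ onto $\{1,\dots,w\}$, the image of $P$ has a coordinate vector $(\rho_1',\dots,\rho_w',0,\dots,0)$ with every $\rho_j'\in R^*$; then applying $\mathrm{diag}\big((\rho_1')^{-1},\dots,(\rho_w')^{-1},1,\dots,1\big)$ carries it to the point $Q_w$ whose coordinate vector has ones in the first $w$ positions and zeros elsewhere. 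Note that $Q_w\in B(k,q)$ since $1\in R^*$, and that $Q_w$ depends only on $w$. Hence the subgroup generated by the two families above is transitive on the weight-$w$ points of $B(k,q)$, which is the lemma.

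I do not expect a real obstacle here: the whole argument rests on the single fact that $R^*$ is closed under products and inverses, which is exactly what makes the diagonal $R^*$-scalings legitimate elements of the stabiliser; everything else is bookkeeping with supports. The one thing worth stating explicitly is that weight is an invariant of the stabiliser action -- both families preserve it -- so ``transitive on the points of fixed weight'' is the strongest statement of this shape one could hope for, and it is already achieved by the group generated by the coordinate permutations and the $R^*$-scalings.
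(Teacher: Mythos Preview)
Your proof is correct and follows essentially the same approach as the paper: both use the monomial subgroup generated by coordinate permutations and diagonal scalings by elements of $R^*$, and both rely on the fact that $R^*$ is a multiplicative group to see that these maps stabilise $R^{k-1}$. The only cosmetic difference is that the paper bundles the permutation and the scaling into a single map $(v_1,\dots,v_{k-1})\mapsto(\rho_1 v_{\sigma(1)},\dots,\rho_{k-1} v_{\sigma(k-1)})$ and works at the level of $\GL$ on vectors, whereas you separate the two families and reduce each point to a canonical representative $Q_w$.
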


\begin{proof}
 It suffices to prove that the setwise stabiliser of $R^{k-1}$ in $\GL(k-1,q^{k(k-1)})$ acts transitively on the vectors of $R^{k-1}$ of fixed weight.
 Consider a permutation $\sigma$ in the symmetric group on $k-1$ elements, and choose $\rho_1, \dots, \rho_{k-1} \in R^*$.
 Consider the linear map $f \in \GL(k-1,q^{k(k-1)})$ defined by
 \[
  f: (v_1, \dots, v_{k-1}) \mapsto (\rho_1 v_{\sigma(1)}, \dots, \rho_{k-1} v_{\sigma(k-1)}).
 \]
 Then $f$ fixes $R^{k-1}$ setwise.
 The set of all maps $f$ of this form constitute a subgroup $G$ of $\GL(k-1,q^{k(k-1)})$.
 One easily checks that the orbits of $G$ on the vectors of $R^{k-1}$ partition the vectors according to their weight.
\end{proof}

\begin{df}
 Let $B$ be a set of points in a projective space.
 A line $\ell$ is said to be \emph{tangent} to $B$ if $|B \cap \ell| = 1$, and \emph{secant} if $|B \cap \ell| > 1$.
\end{df}

\begin{lm}
 \label{Lm:Weight 1}
 Consider a point $P$ of weight one in $\pg(k-2,q^{k(k-1)})$.
 Then there are $\frac{(r+1)^{k-2}-1}r$ secant lines to $B(k,q)$ through $P$, all intersecting $B(k,q)$ in $r+2$ points.
 The other lines through $P$ are tangent to $B(k,q)$.
\end{lm}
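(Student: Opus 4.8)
The plan is to reduce to a canonical weight-one point and then parametrise the lines through it by their direction, reading off the incidences with $B(k,q)$ one coordinate at a time. To set this up, note first that every weight-one point lies on $B(k,q)$: such a point is $\vspan{e_j}$ for a standard basis vector $e_j$, and $e_j \in R^{k-1}$ since $1 \in R$. Hence, by \Cref{Lm:Stabiliser}, the setwise stabiliser of $B(k,q)$ in $\PGL(k-1,q^{k(k-1)})$ acts transitively on the weight-one points; as it maps lines to lines and preserves intersection sizes with $B(k,q)$, it suffices to prove the statement for $P = \vspan{e_{k-1}}$.

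The lines through $P$ in $\pg(k-2,q^{k(k-1)})$ correspond bijectively to the points of $\pg(k-3,q^{k(k-1)})$, the ``directions''; fixing a representative $(v_1,\dots,v_{k-2}) \in \FF_{q^{k(k-1)}}^{k-2}\setminus\{0\}$ of a direction, the corresponding line is
\[
 \ell = \sett{\vspan{(v_1,\dots,v_{k-2},t)}}{t \in \FF_{q^{k(k-1)}}} \cup \set{P}.
\]
I would then describe $\ell \cap B(k,q)$. A point $\vspan{(v_1,\dots,v_{k-2},t)}$ of $\ell$ lies on $B(k,q)$ if and only if there is a scalar $\nu \in \FF_{q^{k(k-1)}}^*$ with $\nu v_i \in R$ for all $i \leq k-2$ and $\nu t \in R$. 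In particular this forces the direction to admit a representative in $R^{k-2}$, so for a direction with no such representative the line satisfies $\ell \cap B(k,q) = \set{P}$ and is tangent.

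For the remaining directions, I would choose the representative so that $(v_1,\dots,v_{k-2}) \in R^{k-2}\setminus\{0\}$, say $v_1 \neq 0$. Since $R$ is closed under multiplication and $v_1 \in R^*$, any $\nu$ with $\nu v_i \in R$ for all $i$ satisfies $\nu = (\nu v_1)v_1^{-1} \in R^*$; hence $\vspan{(v_1,\dots,v_{k-2},t)}$ lies on $B(k,q)$ exactly when $t \in R$. This gives $|R| = r+1$ affine points of $\ell$ on $B(k,q)$, and together with $P$ we obtain $|\ell \cap B(k,q)| = r+2$, so $\ell$ is secant. It remains to count the directions having a representative in $R^{k-2}$: there are $(r+1)^{k-2}-1$ nonzero vectors in $R^{k-2}$, and two of them span the same point of $\pg(k-3,q^{k(k-1)})$ precisely when they differ by a factor of $R^*$ (again because some coordinate lies in $R^*$), so there are $\frac{(r+1)^{k-2}-1}{r}$ secant lines through $P$, and all other lines through $P$ are tangent.

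The recurring technical point --- and the only thing needing care --- is that a vector with a coordinate in $R^*$ cannot be scaled into $R^{k-2}$ (or $R^{k-1}$) by any field element outside $R^*$; this underlies both the dichotomy (a line through $P$ is secant if and only if its direction has a representative in $R^{k-2}$) and the exact numbers $r+2$ and $\frac{(r+1)^{k-2}-1}{r}$. As a consistency check, the secant lines through $P$ partition $B(k,q)\setminus\set{P}$ into sets of size $r+1$, and indeed $\frac{(r+1)^{k-2}-1}{r}(r+1) = \frac{(r+1)^{k-1}-1}{r}-1 = |B(k,q)|-1$.
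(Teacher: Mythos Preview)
Your proof is correct and follows essentially the same approach as the paper's: reduce to a canonical weight-one point via \Cref{Lm:Stabiliser}, show that any secant line through it meets $B(k,q)$ in exactly $r+2$ points (the paper leaves the key step ``the scaling factor must lie in $R^*$'' implicit, whereas you spell it out), and count the secants. The only cosmetic difference is that the paper counts secants by dividing $|B(k,q)|-1$ by $r+1$, while you count directions in $R^{k-2}$ directly and then note this consistency check at the end.
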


\begin{proof}
 By \Cref{Lm:Stabiliser}, we may suppose that $P = \vspan{(1,0,\dots,0)}$.
 Take a line $\ell$ through $P$.
 Suppose that $\ell$ contains another point $Q$ of $B(k,q)$.
 Then $Q$ has a coordinate vector $(\rho_1, \dots, \rho_{k-1}) \in R^{k-1}$.
 The line $\ell$ intersects $B(k,q)$ exactly in the points $P$ and $\vspan{(\rho_1', \rho_2, \dots, \rho_{k-1})}$ with $\rho_1' \in R$.
 Hence, $\ell$ contains $r+2$ points of $B(k,q)$.
 The number of such lines must equal
 \[
  \frac{|B(k,q)|-1}{r+1} = \frac{(r+1)^{k-2}-1}r. \qedhere
 \]
\end{proof}

\section{Reproving Result \texorpdfstring{\ref{Res:BCMP}}{}}
 \label{Sec:k=4}

In this section, we will give an alternative proof for \Cref{Res:BCMP} in case that $q$ is odd, and $q \notin \{3,5,9\}$.
By \Cref{Prop:Not blocking}, it suffices to prove that $B(4,q)$ is not a blocking set of $\pg(2,q^{12})$.
To this end, we recall some results concerning blocking sets in projective planes.

\begin{df}
 Suppose that $B$ is a blocking set of $\pg(2,q)$.
 We call a point $P \in B$ \emph{essential} if $B \setminus \{P\}$ is not a blocking set.
 We call $B$ \emph{minimal} if every point of $B$ is essential.
 We call $B$ a \emph{small} blocking set if $|B| < \frac 32 (q+1)$.
\end{df}

\begin{res}[{\cite[\S 3]{Szonyi:97}}]
 \label{Res:Essential}
 Suppose that $B$ is a blocking set in $\pg(2,q)$ with $|B| \leq 2q$.
 Then $B$ contains a unique minimal blocking set, namely the set of all its essential points.
\end{res}

\begin{res}[{\cite[Corollary 4.8]{Szonyi:97}}]
 \label{Res:Mod}
 Suppose that $B$ is a small minimal blocking set in $\pg(2,q)$, with $q = p^h$ and $p$ prime.
 Then every line intersects $B$ in $1 \pmod p$ points.
\end{res}

\begin{res}
[{\cite[Theorem 5.6] {Szonyi:97}}]
\label{res:interval}
Let $B$ be a minimal non-trivial blocking
set in $\pg(2,q), q = p^n$. Suppose that $|B| < 3(q + 1)/2$. Then
\begin{align}
 \label{eq}
 q + 1 + \frac{q}{p^e+2} \leq |B| \leq \frac{qp^e + 1 -\sqrt{(qp^e + 1)^2 - 4q^2 p^e}}{2}
\end{align}
for some integer $e\geq 1$.
\end{res}

\begin{lm}
 Suppose that $q \geq 7$.
 If $B(4,q)$ is be a blocking set, then it is a small minimal blocking set.
\end{lm}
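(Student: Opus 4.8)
The statement asserts two things: that $B(4,q)$ is \emph{small}, and that it is \emph{minimal}. Smallness is a routine estimate: $B(4,q)$ lies in $\pg(2,q^{12})$, whose lines carry $q^{12}+1$ points, and with $r = |R^*| = (q^2+q+1)(q^4-1)$ we have $|B(4,q)| = \frac{(r+1)^3-1}{r} = r^2+3r+3$; since $r < \frac65 q^6$ for $q \geq 7$, this is less than $\frac32(q^{12}+1)$ (and in particular at most $2q^{12}$). For minimality I would first sort the points of $B(4,q)$ by weight: there are exactly $3$ points of weight $1$ (the fundamental points), $3r$ of weight $2$, and $r^2$ of weight $3$, and by \Cref{Lm:Stabiliser} the setwise stabiliser $G$ of $B(4,q)$ in $\PGL(3,q^{12})$ acts transitively on each of these three classes. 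A weight-$1$ point lies, by \Cref{Lm:Weight 1}, on only $r+2$ secant lines, hence on $q^{12}-r-1 > 0$ tangent lines, so all weight-$1$ points are essential.

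Next I would produce a tangent line through a weight-$2$ point, which by transitivity of $G$ may be taken to be $P = \vspan{(1,1,0)}$. For a parameter $d \in \FF_{q^{12}}$, the line joining $P$ to $\vspan{(d,0,1)}$ consists of $P$ together with the points $\vspan{(s+d,s,1)}$, $s \in \FF_{q^{12}}$; since $R^*$ is a multiplicative group one checks that $\vspan{(s+d,s,1)}$ lies in $B(4,q)$ exactly when $s \in R \cap (R-d)$, and this set is empty precisely when $d \notin R - R$. Hence it suffices to show $R - R \neq \FF_{q^{12}}$. To that end, write $R = \{0\} \cup \bigcup_{i=1}^{N} V_i$, where $N = q^2+q+1$ and the $V_i$ are the distinct $4$-dimensional $\FF_q$-subspaces $x\FF_{q^4}$ of $\FF_{q^{12}}$ with $x \in \FF_{q^3}^*$, any two of which meet only in $0$. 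Since $V_i - V_i = V_i$, while $V_i - V_j = V_i + V_j$ has $\FF_q$-dimension $8$ for $i \neq j$, we get $R - R \subseteq R \cup \bigcup_{i<j}(V_i+V_j)$, and therefore $|R - R| \leq (r+1) + \binom{N}{2}q^8 < q^{12}$ for $q \geq 7$. Picking $d$ outside $R - R$ produces the required tangent line, so all weight-$2$ points are essential.

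To conclude, I would apply \Cref{Res:Essential}: as $|B(4,q)| \leq 2q^{12}$, the set $B'$ of essential points of $B(4,q)$ is its unique minimal blocking set; in particular $B'$ is a blocking set of $\pg(2,q^{12})$, and since collineations preserve essentiality, $B'$ is fixed setwise by $G$, hence is a union of weight classes. By the previous steps $B'$ contains all weight-$1$ and all weight-$2$ points. If $B'$ contained no weight-$3$ point, then $|B'| = 3 + 3r$, which is far below $q^{12}+1$, contradicting the fact that every blocking set of $\pg(2,q^{12})$ has at least $q^{12}+1$ points. Hence $B' = B(4,q)$, so $B(4,q)$ is minimal, which together with the first paragraph proves it is a small minimal blocking set.

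The delicate point is the weight-$3$ class: a direct construction of a tangent line through a weight-$3$ point reduces to showing that the ratio set $\{(b-1)(a-1)^{-1} : a,b \in R\setminus\{1\}\}$ does not cover $\FF_{q^{12}}^*$, and the naive bound for the size of this set is $r^2 > q^{12}$, which is too weak — one would need a structural analysis of this set. The argument sketched above avoids it: weight-symmetry forces the (automatically blocking) set of essential points to contain either all or none of the weight-$3$ points once it contains all weight-$1$ and weight-$2$ points, and a crude cardinality count rules out "none".
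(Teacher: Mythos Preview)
Your argument is correct and follows the same overall architecture as the paper's proof: establish smallness by the estimate $|B(4,q)|=r^2+3r+3<\tfrac32(q^{12}+1)$, invoke \Cref{Res:Essential} so that the essential points form a minimal blocking set $B'$, use \Cref{Lm:Stabiliser} to reduce to checking essentiality weight by weight, handle weight~$1$ via \Cref{Lm:Weight 1}, and rule out ``no weight-$3$ points in $B'$'' by the crude bound $3r+3<q^{12}+1$.

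The one genuine difference is your treatment of the weight-$2$ points. The paper does \emph{not} construct a tangent line through a weight-$2$ point; instead it first argues (by the cardinality bound) that all weight-$3$ points are essential, and then observes that the line through $(1,0,0)$ and $(0,1,1)$ meets $B(4,q)$ in exactly $r+2$ points, of which $r+1$ (namely $(1,0,0)$ and the $r$ weight-$3$ points) are already known to lie in $B'$. If $(0,1,1)$ were inessential, this line would meet the small minimal blocking set $B'$ in $r+1\equiv 0\pmod p$ points, contradicting Sz\H onyi's $1\pmod p$ result (\Cref{Res:Mod}). Your route instead shows directly that $R-R\neq\FF_{q^{12}}$ via the decomposition $R=\bigcup_i x_i\FF_{q^4}$ and the bound $|R-R|\le (r+1)+\binom{q^2+q+1}{2}q^8<q^{12}$, which produces an explicit tangent line through $(1,1,0)$. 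This is more elementary in that it avoids \Cref{Res:Mod} entirely, at the cost of a short additive-combinatorics computation; the paper's version is shorter but leans on the deeper structural input about small minimal blocking sets.
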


\begin{proof}
 Suppose that $B(4,q)$ is a blocking set.
 Define $r$ as before to be the size of $R^* = \FF_{q^3}^* \cdot \FF_{q^4}^*$, i.e.\ $r = (q^4-1)(q^2+q+1)$.
 Then $|B(4,q)| = r^2 + 3r + 3$.
 One can verify computationally that $|B(4,q)| < \frac 32 (q^{12}+1)$ if $q \geq 7$, hence $B(4,q)$ must be a small blocking set.

 Next, let $B'$ be the set of essential points of $B(4,q)$.
 By \Cref{Res:Essential}, $B'$ is a small minimal blocking set.
 Note that by \Cref{Lm:Stabiliser}, if one point of a certain weight is essential, then all points of this weight are essential.
 By \Cref{Lm:Weight 1}, points of weight 1 lie on tangent lines to $B(k,q)$, hence are essential.
 If the points of weight 3 of $B(4,q)$ were not essential, then $|B'| \leq 3r + 3 < q^{12}+1$, which is impossible given that $B'$ is a blocking set.
 Lastly, let $P_2$ be a point of weight 2, without loss of generality $(0,1,1)$.
 Let $P_1$ be the point $(1,0,0)$.
 Consider the line $\ell$ through $P_1$ and $P_2$.
 Then $\ell$ intersects $B(4,q)$ in $P_1$, $P_2$, and $r$ other points, all of weight 3, cf.\ the proof of \Cref{Lm:Weight 1}.
 If $P_2$ was not essential, then $B'$ would intersect $\ell$ in $r+1 \equiv 0 \pmod p$ points, contradicting \Cref{Res:Mod}.
 Hence, $B(4,q)$ must be a minimal blocking set.
\end{proof}

Now we prove that for $q\geq 7$, $q \neq 9$, the minimal blocking set $B(4,q)$ does not fit in the intervals in Result \ref{res:interval}, hence a contradiction follows.  

\begin{prop}
    $B(4,q)$ is not a blocking set if $q=7$ or $q>9$ is odd. 
\end{prop}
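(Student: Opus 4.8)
The plan is to assume, for contradiction, that $B(4,q)$ is a blocking set, and to show its size is forbidden. By the preceding lemma it would then be a small minimal blocking set of $\pg(2,q^{12})$, and since $|B(4,q)| = r^2+3r+3 > q^{12}+1$ with $r = (q^2+q+1)(q^4-1)$, it is not a line, hence a minimal non-trivial blocking set. So Result~\ref{res:interval}, applied to $\pg(2,q^{12})$ (whose characteristic is still $p$), would give an integer $e \geq 1$ with
\[
 q^{12}+1+\frac{q^{12}}{p^e+2} \;\leq\; |B(4,q)| \;\leq\; \frac{q^{12}p^e+1-\sqrt{(q^{12}p^e+1)^2-4q^{24}p^e}}{2}.
\]
I would write $Q = q^{12}$, call the right-hand side $U_e$, and set $N = |B(4,q)|-Q-1 = r^2+3r+2-q^{12}$. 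The first, routine, step is the elementary estimate
\[
 2q^{11} \;<\; N \;<\; 2q^{11}+4q^{10} \;=\; 2q^{10}(q+2),
\]
which follows from $q^6 < r < q^6+q^5+q^4$ and holds for all $q \geq 3$.

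Next I would turn both ends of the interval into bounds on the power $p^e$. The left-hand inequality is equivalent to $\frac{Q}{p^e+2} \leq N$, i.e.\ $p^e \geq \frac QN - 2$, and together with $N < 2q^{10}(q+2)$ this gives $p^e > \frac{q^2}{2(q+2)} - 2 = \frac q2 - 3 + \frac{2}{q+2} > \frac q2 - 3$. For the right-hand inequality, the key observation is that $U_e$ is the smaller root of $g(x) := x^2-(Qp^e+1)x+Q^2p^e$, so $|B(4,q)| \leq U_e$ forces $g(|B(4,q)|) \geq 0$; and since $|B(4,q)| = Q+N+1 > Q$, the quantity $g(|B(4,q)|) = |B(4,q)|\bigl(|B(4,q)|-1\bigr) - (N+1)Q\,p^e$ is a strictly decreasing affine function of $p^e$ which vanishes at $T := \frac{|B(4,q)|(|B(4,q)|-1)}{Q(N+1)}$, whence $p^e \leq T$. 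A short computation rewrites this as $T = 2 + \frac{Q-1}{N+1} + \frac NQ$, and the two bounds on $N$ then yield $\frac{Q-1}{N+1} < \frac q2$ and $\frac NQ < 1$ for $q \geq 7$, so that $T < \frac q2 + 3$.

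Combining the last two estimates, the assumption would force a power $p^e$ with $e \geq 1$ to lie in the open interval $\bigl(\frac q2-3,\ \frac q2+3\bigr)$, and the closing step is to check that this is impossible precisely when $q = 7$ or $q > 9$ is odd. If $q$ is prime, then already $p^e = q^e \geq q \geq \frac q2+3$ for $e=1$ (using $q \geq 7$), so no such $e$ exists. If $q = p^h$ with $h \geq 2$, then $q$ is an odd prime power different from $9$ and hence $q \geq 25$; for $1 \leq e \leq h-1$ we have $p^e \leq p^{h-1} = q/p \leq q/3 \leq \frac q2-3$, and for $e \geq h$ we have $p^e \geq q \geq \frac q2+3$. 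In all cases $p^e \notin \bigl(\frac q2-3,\ \frac q2+3\bigr)$, the sought contradiction; therefore $B(4,q)$ is not a blocking set.

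The step I expect to need the most care is the handling of the upper endpoint $U_e$: the intervals of Result~\ref{res:interval} all sit just above $q^{12}+1$, whereas $|B(4,q)|$ is of size roughly $q^{12}+2q^{11}$, so the contradiction does not come from $|B(4,q)|$ being too small, but from $|B(4,q)|$ overshooting $U_e$ — equivalently from $p^e$ being pushed below $T$ — and one must keep every inequality oriented accordingly. The remaining pieces (the polynomial bounds on $N$ and $T$, and the final check on powers of $p$) are elementary, but it is exactly there that even characteristic, or the excluded orders $q \in \{3,5,9\}$, would leave an admissible power $p^e$ inside $\bigl(\frac q2-3,\frac q2+3\bigr)$ and break the argument.
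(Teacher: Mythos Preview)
Your argument is correct and follows the same overall strategy as the paper: assume $B(4,q)$ is a blocking set, invoke the preceding lemma and Result~\ref{res:interval}, and show that $|B(4,q)|$ cannot lie in any of the admissible intervals. The execution, however, is organised differently. The paper fixes the two consecutive values $p^e=q/p$ and $p^e=q$, shows that $|B(4,q)|$ lies strictly below the lower endpoint of~\eqref{eq} at $p^e=q/p$ and strictly above the upper endpoint at $p^e=q$, and then (implicitly) uses the monotonicity of both endpoints in $p^e$ to rule out all other $e$. You instead invert the two inequalities in~\eqref{eq} into bounds on $p^e$, obtaining $p^e\in(q/2-3,\,q/2+3)$, and then check directly that no prime power $p^e$ with $e\geq 1$ falls in that window when $q=7$ or $q>9$ is odd. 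Your reformulation has the advantage of making the monotonicity step unnecessary and of explaining transparently why $q\in\{3,5,9\}$ and even $q$ are excluded: precisely in those cases a genuine power of $p$ can land in $(q/2-3,\,q/2+3)$. The paper's version, on the other hand, avoids the algebraic rewriting of the upper endpoint $U_e$ as a bound $p^e\leq T$ and works directly with the explicit polynomial form of $|B(4,q)|$.
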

    
\begin{proof}
Firstly, we prove that $|B(4,q)|=q^{12} + 2q^{11} + 3q^{10} + 2q^9 - q^8 - 4q^7 - 3q^6 - q^5 + 2q^4 + 2q^3 -q + 1$
is greater than the right hand side of (\ref{eq}) in the case that $p^e = q$.

For this, observe that for $q\geq 7$,
$|B(4,q)|\geq q^{12} + 2q^{11} + 3q^{10}$
and 
$$q^{12} + 2q^{11} + 3q^{10} > \frac{q^{13}+1-\sqrt{(q^{13}+1)^2-4q^{25}}}{2}.$$
The latter is true, because
\begin{align*}
 && (q^{13}+1)^2-4q^{25} &> (q^{13}+1 - 2q^{12} -4q^{11} -6q^{10})^2 \\
 \Leftarrow && 0 &> -4q^{24} \Big(1 - \underbrace{\frac1q - \frac{10}{q^2} -\frac{12}{q^3} - \frac{9}{q^4} + \frac{1}{q^{12}} + \frac{2}{q^{13}} + \frac{3}{q^{14}}}_{\text{$< 1$ for $q \geq 7$}
 %|...|< 1\textrm{ if } q\geq 7
 }\Big).
\end{align*}
%$$ (q^{13}+1)^2-4q^{25}> (q^{13}+1 - 2q^{12} -4q^{11} -6q^{10})^2$$
% $$0> -4q^{24} +4q^{23} -40q^{22} +48q^{21} +36q^{20} -4q^{12} -8q^{11} -12q^{10} $$
%$$0> -4q^{24} (1 - \underbrace{{1\over q} - {10\over q^2} -{12\over q^3} -{9\over q^4} +{1\over q^{12}} +{2\over q^{13}} +{3\over q^{14}}).}\atop{|...|< 1\textrm{ if } q\geq 7} $$
Now we prove that $|B(4,q)|$ is less than the left hand side of (\ref{eq}) for the smaller value of $e$, i.e.\ the case where $p^e = q/p$.
We only need to consider this if $q$ is not prime.
Hence, we need to prove that
\[
 q^{12} + 2q^{11} + 3q^{10} + 2q^9 - q^8 - 4q^7 - 3q^6 - q^5 + 2q^4 + 2q^3 -q + 1< q^{12}+1+\frac{q^{12}}{\frac qp +2}.
\]
Estimating the left hand side from above we get
$2q^{11} + 3q^{10} + 2q^9 < \frac{q^{12}}{q/p+2}$, so
\[
 \frac 2p q^{12} + \left(4 + \frac 3p \right) q^{11} + \left( 6 + \frac 2p \right) q^{10} + 4q^9 < q^{12}
\]
which always holds if $p>2$, $(p,q)\neq (3,9)$, and $q \geq p^2$.
\end{proof}

\begin{crl}
    If $q=7$ or $q>9$ odd then in $\pg(3,q^3)$, there exist 3 $\FF_q$-subgeometries, such that the union of their point sets is a strong blocking set of size $3(q^2+1)(q+1)$. \qed
\end{crl}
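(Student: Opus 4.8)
The plan is to read the corollary off the non-blocking statement just established, with only some bookkeeping about sizes. \textbf{Step 1.} By the Proposition above, for $q = 7$ or $q > 9$ odd the set $B(4,q)$ is not a blocking set of $\pg(2,q^{12})$. Feeding this into \Cref{Prop:Not blocking} with $k = 4$ yields at once that $\pg(3,q^3)$ contains $3$ $\FF_q$-subgeometries whose point sets union to a strong blocking set; call this union $S$.

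\textbf{Step 2.} I would then compute $|S|$. Tracing through the proofs of \Cref{Prop:Not blocking} and \Cref{Lm:Independent}, the three subgeometries are the ones associated with cosets $\alpha_1 R^*, \alpha_2 R^*, \alpha_3 R^*$ of an $R$-independent triple $\alpha_1,\alpha_2,\alpha_3$, and each is isomorphic to $\pg(3,q)$, hence has $\frac{q^4-1}{q-1} = (q^2+1)(q+1)$ points. The only thing left to verify is that these three subgeometries are pairwise disjoint. If some point of $\pg(3,q^3)$ had coordinate vectors in both $\alpha_i R^*$ and $\alpha_j R^*$ with $i \neq j$, then $\lambda\,\rho_i\alpha_i = \rho_j\alpha_j$ for suitable $\lambda \in \FF_{q^3}^*$ and $\rho_i,\rho_j \in R^*$; since $\FF_{q^3}^* \subseteq R^*$ and $R^*$ is a multiplicative group, $\lambda\rho_i \in R^*$, so this is a nontrivial $R$-dependence among $\alpha_1,\alpha_2,\alpha_3$, a contradiction. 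Hence the three point sets are disjoint and $|S| = 3(q^2+1)(q+1)$, as claimed.

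I expect no genuine obstacle in this corollary: it is just the translation of the Proposition into the language of \Cref{Res:BCMP}. The sole non-formal ingredient is the disjointness check in Step 2 — equivalently, that the cosets $\alpha_i R^*$ are distinct — which follows immediately from $R$-independence.
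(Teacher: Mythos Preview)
Your proposal is correct and matches the paper's approach: the paper's proof is literally just \qed, since the existence of the three subgeometries follows immediately from the preceding Proposition together with \Cref{Prop:Not blocking}, and the size $3(q^2+1)(q+1)$ is implicit in the setup of \Cref{Sec:General k}, where distinct cosets of $R^*$ were already shown to yield disjoint subgeometries. Your explicit disjointness check via $R$-independence is a correct and welcome bit of extra care, but not a different route.
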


\paragraph{Acknowledgements.}
We are grateful to Alessandro Neri, Ferdinando Zullo, and Lins Denaux for helpful discussions.
The first author was partially supported by Fonds Wetenschappelijk Onderzoek project 12A3Y25N and by a Fellowship of the Belgian American Educational Foundation. The second author was partially supported by the National Research, Development and Innovation Office within the framework of the Thematic Excellence Program 2021 - National Research Subprogramme: “Artificial intelligence, large networks, data security: mathematical foundation and applications”. The third author was partially supported by the National Research, Development and Innovation Fund, grant number ADVANCED 153080.

\end{document}